\newtheorem{theorem}{Theorem}
\newtheorem{proposition}[theorem]{Proposition}
\newtheorem{claim}[theorem]{Claim}
\theoremstyle{definition}
\newtheorem{definition}[theorem]{Definition}
\theoremstyle{remark}
\def\concat{^\frown}
\begin{document}

\title{The Gamma Question for Many-one Degrees}

\author[M. Harrison-Trainor]{Matthew Harrison-Trainor}
\address{Group in Logic and the Methodology of Science\\
University of California, Berkeley\\
 USA}
\email{matthew.h-t@berkeley.edu}
\urladdr{\href{http://www.math.berkeley.edu/~mattht/index.html}{www.math.berkeley.edu/$\sim$mattht}}

\subjclass[2010]{03D32}
\keywords{computability theory, coarse computability, Gamma question} 

\thanks{The author was partially supported by NSERC PGSD3-454386-2014. The author would like to thank Antonio Montalb\'an, James Walsh, Carl Jockusch, and Andr\'e Nies for their helpful comments.}

\begin{abstract}
A set $A$ is coarsely computable with density $r \in [0,1]$ if there is an algorithm for deciding membership in $A$ which always gives a (possibly incorrect) answer, and which gives a correct answer with density at least $r$. To any Turing degree $\mathbf{a}$ we can assign a value $\Gamma_T(\mathbf{a})$: the minimum, over all sets $A$ in $\mathbf{a}$, of the highest density at which $A$ is coarsely computable. The closer $\Gamma_T(\mathbf{a})$ is to $1$, the closer $\mathbf{a}$ is to being computable. Andrews, Cai, Diamondstone, Jockusch, and Lempp noted that $\Gamma_T$ can take on the values $0$, $1/2$, and $1$, but not any values in strictly between $1/2$ and $1$. They asked whether the value of $\Gamma_T$ can be strictly between $0$ and $1/2$. This is the Gamma question.

Replacing Turing degrees by many-one degrees, we get an analogous question, and the same arguments show that $\Gamma_m$ can take on the values $0$, $1/2$, and $1$, but not any values strictly between $1/2$ and $1$. We will show that for any $r \in [0,1/2]$, there is an $m$-degree $\mathbf{a}$ with $\Gamma_m(\mathbf{a}) = r$. Thus the range of $\Gamma_m$ is $[0,1/2] \cup \{1\}$.

Benoit Monin has recently announced a solution to the Gamma question for Turing degrees. Interestingly, his solution gives the opposite answer: the only possible values of $\Gamma_T$ are $0$, $1/2$, and $1$.
\end{abstract}

\maketitle

\section{Introduction}

We give a solution to the Gamma question for many-one degrees by showing that for each $r \in [0,1/2]$, there is a many-one degree $\mathbf{a}$ such that $\Gamma_m(\mathbf{a}) = r$.

A set $A \subseteq \omega$ is \textit{coarsely computable} if, roughly speaking, we have an algorithm for deciding membership in $A$ which always gives an answer, and the answer is correct except on a set of density zero. By density, we mean asymptotic lower density.

\begin{definition}
The \textit{lower density} of a set $Z \subseteq \omega$ is
\[ \underline{\rho}(Z) := \liminf_{n \to \infty} \frac{|Z \cap [0,n)|}{n}.\]
\end{definition}

\noindent More generally, we can talk about algorithms which are correct half the time, or a third of the time, or almost never. 
To a set $A \subseteq \omega$, we can assign a real number which measures the highest density to which it can be approximated by a computable set.

\begin{definition}[\cite{HJMS}]\label{ccd}
A set $A \subseteq \omega$ is \textit{coarsely computable at density} $r \in [0,1]$ if there is a computable set $R$ such that $\underline{\rho}(A \leftrightarrow R) = r$. Here, $A \leftrightarrow R$ is the set on which $A$ and $R$ agree:
\[ A \leftrightarrow R := \{ x \mid x \in A \Longleftrightarrow x \in R \}.\]
\end{definition}

\begin{definition}[\cite{HJMS}]\label{ccbs}
The \textit{coarse computability bound} of a set $A \subseteq \omega$ is
\[ \gamma(A) := \sup\{r \mid \text{$A$ is coarsely computable at density $r$}\}.\]
That is, $\gamma(A)$ is the supremum, over all computable sets $R$, of $\underline{\rho}(A \leftrightarrow R)$.
\end{definition}

\noindent It is known that for each $r \in (0,1]$, there are sets with coarse computability bound $r$ such that the supremum is obtained, and sets where the supremum is not obtained \cite{HJMS}.

Jockusch and Schupp \cite{JS} have shown that every non-zero Turing degree contains a set which is not coarsely computable. (This follows from the proof of Proposition \ref{prop:1/2} below.) Thus, if $\Gamma_T(\mathbf{a}) = 1$, then $\mathbf{a} = \mathbf{0}$. Andrews, Cai, Diamondstone, Jockusch, and Lempp suggested assigning to each Turing degree a real number which measures the extent to which all sets computable in that degree can be coarsely computed.

\begin{definition}[\cite{ACDJL}]\label{ccbt}
The \textit{coarse computability bound} of a Turing degree $\mathbf{a}$ is
\[ \Gamma_T(\mathbf{a}) := \inf\{\gamma(A) \mid \text{$A$ is $\mathbf{a}$-computable} \}. \]
It suffices to take the infimum only over sets in $\mathbf{a}$.
\end{definition}

Andrews, Cai, Diamondstone, Jockusch, and Lempp showed that $\Gamma_T(\mathbf{a})$ can take on the values $0$, $1/2$, and $1$.\footnote{See also \cite{MN} for a unifying approach to some of these examples.}

\begin{theorem}[\cite{ACDJL}]\label{thm:poss}
For a Turing degree $\mathbf{a}$:
\begin{enumerate}
	\item If $\mathbf{a}$ is computable, $\Gamma_T(\mathbf{a}) = 1$.
	\item If $\mathbf{a}$ is computably traceable and non-computable, $\Gamma_T(\mathbf{a}) = 1/2$.
	\item If $\mathbf{a}$ is 1-random and hyperimmune-free, $\Gamma_T(\mathbf{a}) = 1/2$.
	\item If $\mathbf{a}$ is hyperimmune, $\Gamma_T(\mathbf{a}) = 0$.
	\item If $\mathbf{a}$ is PA, $\Gamma_T(\mathbf{a}) = 0$.
\end{enumerate}
\end{theorem}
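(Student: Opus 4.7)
The plan is to treat the five cases separately, since they rely on distinct techniques. Parts (2) and (3) will share an upper bound derived from Proposition \ref{prop:1/2}, while parts (4) and (5) construct a single set $A \in \mathbf{a}$ witnessing $\gamma(A) = 0$.

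Part (1) is immediate: if $\mathbf{a}$ is computable and $A \in \mathbf{a}$, then $R = A$ is computable with $\underline{\rho}(A \leftrightarrow R) = 1$, so $\gamma(A) = 1$ and hence $\Gamma_T(\mathbf{a}) = 1$.

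For part (2), the upper bound $\Gamma_T(\mathbf{a}) \leq 1/2$ will follow from Proposition \ref{prop:1/2} using noncomputability. For the lower bound $\Gamma_T(\mathbf{a}) \geq 1/2$, my strategy is: given $A \in \mathbf{a}$ and $\epsilon > 0$, use computable traceability to obtain a computable trace $T$ of the block function $n \mapsto A \restriction [2^{n-1}, 2^n)$ with $|T(n)| \leq h(n)$ for a slow-growing computable order $h$ (for instance, $h(n) \leq 2^{\epsilon^2 2^{n-1}/2}$). This gives a computable list of at most $h(n)$ candidate strings for $A \restriction B_n$ where $B_n = [2^{n-1},2^n)$, one of which is correct. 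A Chernoff-plus-union-bound argument then guarantees a string $\sigma_n$ at Hamming distance $\leq (1/2+\epsilon)|B_n|$ from every candidate, which can be found by computable finite search; concatenating these $\sigma_n$ yields a computable $R$ with $\underline{\rho}(A \leftrightarrow R) \geq 1/2 - \epsilon$, and sending $\epsilon \to 0$ completes the bound. Part (3) will proceed analogously: hyperimmune-freeness dominates every $\mathbf{a}$-computable function by a computable one, which together with the balanced digit statistics of a 1-random $A$ underpins an analogous block-level approximation scheme.

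For parts (4) and (5), I will construct a single $A \in \mathbf{a}$ with $\gamma(A) = 0$ by diagonalization. In (4), a hyperimmune function $f \leq_T \mathbf{a}$ escapes every computable bound, so I can carve $\omega$ into intervals $I_e$ on which $A$ disagrees with the $e$-th computable set everywhere, choosing the lengths via $f$ so that each $I_e$ dwarfs $\bigcup_{i<e} I_i$; this drives $\underline{\rho}(A \leftrightarrow R_e) = 0$ for every computable $R_e$. In (5), PA-ness provides a $\{0,1\}$-valued DNC function; I bundle all coarse-approximation requirements into a single $\Pi^0_1$ class of admissible $A$'s and extract a path using the PA property. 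The main obstacle I expect is the combinatorial search lemma behind (2) and (3): producing, computably and uniformly, a string within Hamming distance $(1/2+\epsilon)|B_n|$ of every string in a short computable list. This is where the precise strength of computable traceability (in (2)) and of hyperimmune-freeness combined with randomness (in (3)) is exploited; the other cases should reduce to reasonably standard diagonalizations once the blocking structure is fixed.
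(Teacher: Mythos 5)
First, a caveat: the paper does not prove this theorem at all --- it is quoted from \cite{ACDJL} (the only argument reproduced in the paper is Proposition \ref{prop:1/2}), so your proposal has to be measured against the standard proofs. Parts (1) and (5) of your sketch are essentially those proofs ((5) works because the requirements ``if $\varphi_e$ happens to be defined on all of the $n$-th designated interval, then $A$ disagrees with it there'' form a nonempty $\Pi^0_1$ class, so a PA degree computes a member; the DNC function itself plays no direct role). Part (2) is also the standard traceability argument (trace the block function, then use Hoeffding plus a union bound to find computably a string close to every traced candidate), but your parameters have a quantitative slip: with $B_n=[2^{n-1},2^n)$ each block is as long as everything before it, so even if the approximation agrees with $A$ on $(1/2-\epsilon)|B_n|$ positions of every block, the disagreements may be concentrated at the front of a block and the \emph{lower} density of agreement only comes out around $1/3$. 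You need $|B_n|$ small compared to $\sum_{i<n}|B_i|$ (e.g.\ $|B_n|=n$); this is still compatible with the Chernoff bound because computable traceability lets you take the trace bound to be an arbitrarily slow order.

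The genuine gaps are in (3) and (4). For (3), a 1-random hyperimmune-free degree is \emph{not} computably traceable, so there is no ``analogous block-level approximation scheme''; moreover the set to be approximated is an arbitrary $A\leq_T X$, not the random $X$, so ``balanced digit statistics of $A$'' is not the relevant fact at all. The actual argument is different in kind: hyperimmune-freeness turns the reduction into a truth-table reduction $A=\Phi^X$ with computable use, one defines $R(n)$ by majority vote over oracles ($R(n)=1$ iff the measure of $Y$ with $\Phi^Y(n)=1$ is at least $1/2$), and then 1-randomness of the \emph{oracle} $X$, via an effective concentration/martingale test, shows that $X$ cannot land on the minority side on a set of positions of upper density more than $1/2$, giving $\underline{\rho}(A\leftrightarrow R)\geq 1/2$. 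This key idea is absent from your sketch. For (4), the step ``intervals $I_e$ on which $A$ disagrees with the $e$-th computable set everywhere'' hides the real difficulty: there is no $\mathbf{a}$-effective list of the total computable sets and $\mathbf{a}$ cannot decide which $\varphi_e$ are total, so the construction as described is not obviously $\leq_T\mathbf{a}$; and merely choosing the interval lengths via $f$ is not where hyperimmunity does any work --- any $\mathbf{a}$-computable choice of lengths would ``dwarf'' the earlier intervals, so if your outline were sufficient it would apply to every noncomputable degree, contradicting part (2). The standard proof uses the escaping function as a \emph{time bound}: fix the intervals computably, attack each index on infinitely many intervals, set $A(x)=1-\varphi_e(x)$ when the relevant computations converge within the allotted $f$-many steps and $A(x)=0$ otherwise; for a total $\varphi_e$ the convergence-time function is computable, so $f$ exceeds it on infinitely many of the designated intervals and the diagonalization succeeds there. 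Without this (or some equivalent device) part (4) does not go through.
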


Hirschfeldt, Jockusch, McNicholl, and Schupp showed that $\Gamma_T(\mathbf{a})$ cannot take on any values in the open interval $(1/2,1)$. We will repeat the proof here because we will reference it later. 

\begin{proposition}[\cite{HJMS}]\label{prop:1/2}
Let $\mathbf{a}$ be a nonzero Turing degree. Then $\Gamma_T(\mathbf{a}) \leq \frac{1}{2}$.
\end{proposition}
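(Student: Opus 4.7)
My plan is to produce, for any noncomputable $B \in \mathbf{a}$, a set $A \equiv_T B$ with $\gamma(A) \leq 1/2$ by a block-diagonalization argument. The idea is that $\gamma(A) \leq 1/2$ just says no computable set agrees with $A$ on lower density greater than $1/2$, so it suffices to build $A$ that simultaneously codes $B$ and diagonalizes (on a density-$1/2$ level) against every computable 0-1 function.

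I would partition $\omega$ into consecutive blocks $I_0 \prec I_1 \prec \cdots$ of rapidly growing sizes. On each block $I_e$ I designate a single coding position $p_e$ and set $A(p_e) = B(e)$ (this gives $B \leq_T A$), and I select the remaining bits of $A \restriction I_e$ by finite search so that, for every $j \leq e$ and every $x \in I_e$ on which $\varphi_j$ converges within a time bound $s(e)$, the agreement count $\lvert \{x \in I_e : A(x) = \varphi_j(x)\}\rvert$ is at most $(1/2 + \epsilon_e)\lvert I_e\rvert$ for a sequence $\epsilon_e \to 0$. A Chernoff/Hoeffding union bound over the at most $e+1$ active requirements shows that such an $A \restriction I_e$ exists whenever $\lvert I_e\rvert$ is large enough compared to $\log(e+1)/\epsilon_e^2$, and since the search is finite and $B$ enters only through the coding bit, $A \leq_T B$, hence $A \equiv_T B$.

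For the density bound, given any total computable 0-1 function $R = \varphi_j$: on every block $I_e$ large enough that the time bound $s(e)$ captures all of $\varphi_j \restriction I_e$, the agreement count is bounded by $(1/2 + \epsilon_e)\lvert I_e\rvert$; summing these contributions across blocks and taking the liminf yields $\underline{\rho}(A \leftrightarrow R) \leq 1/2$, giving $\gamma(A) \leq 1/2$ as desired.

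The main obstacle is the simultaneous calibration of the parameters $\lvert I_e\rvert$, $s(e)$, and $\epsilon_e$. The Chernoff bound demands $\lvert I_e\rvert$ large; the density calculation demands $\epsilon_e \to 0$; and for each fixed total $\varphi_j$, $s(e)$ must eventually accommodate the runtime of $\varphi_j$ on all of $I_e$. Since no single computable $s$ can uniformly dominate every computable runtime function, some bookkeeping is needed: one arranges that for each fixed $\varphi_j$ the set of blocks on which $\varphi_j$ is successfully captured has asymptotic density $1$ (for instance by letting requirement $R_j$ become active only on blocks $I_e$ with $e \gg j$, and letting $s(e)$ grow quickly relative to the currently-active $j$'s). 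This balancing makes the construction technical but not conceptually deep.
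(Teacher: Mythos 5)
There is a genuine gap, and it sits exactly at the point you flag as ``bookkeeping.'' Your per-block diagonalization only controls agreement with $\varphi_j$ on blocks where $\varphi_j\restriction I_e$ is fully captured by the time bound $s(e)$, and you then assert that one can arrange, for each total $\varphi_j$, that the captured blocks have density $1$. This is impossible for any computable $s$ (and $s$ must be computable, or at least $B$-computable, for your search to give $A \leq_T B$): given $s$ and the block structure, the function that on input $x \in I_e$ idles for $s(e)+1$ steps and then outputs $0$ is total computable and is captured on \emph{no} block. Even allowing $s \leq_T B$ does not help in general, since for a hyperimmune-free noncomputable $B$ every $B$-computable function is dominated by a computable one. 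Worse, on uncaptured blocks the adversary is not merely uncontrolled but can actively win: apart from the single coding bit $B(e)$, your construction on $I_e$ is computable, so a slow-but-total adversary (using the recursion theorem to dodge $s$) can compute both candidate strings $\sigma_0,\sigma_1$ corresponding to the two values of $B(e)$ and output, say, their pointwise majority; if the finite search is anything like ``least string satisfying the constraints,'' $\sigma_0$ and $\sigma_1$ typically agree off the coding position, and the adversary gets agreement near $1$ on every block, so $\underline{\rho}(A\leftrightarrow R)$ is near $1$, not $\leq 1/2$. So the liminf computation in your last step does not go through.

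The paper avoids the domination problem entirely by not diagonalizing against computable sets at all. It fixes $A\in\mathbf{a}$, sets $I_n=[n!,(n+1)!)$ and $B=\bigcup_{n\in A}I_n$ (so each bit of $A$ is copied across a huge block, and $B\leq_m A$), and argues contrapositively: if a computable $R$ agreed with $B$ on lower density $>1/2$, then for all large $n$ a majority vote of $R$ on $I_n$ would correctly decide $n\in A$, making $A$ computable. The moral repair of your approach points in the same direction: to survive uncaptured adversaries you would need the two candidate blocks to be (nearly) complementary, so that beating density $1/2$ forces a computable guess of the coding bits $B(e)$ with only finitely many errors, contradicting noncomputability of $B$ --- but that is precisely the coding/majority-vote argument above, not the time-bounded diagonalization you propose.
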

\begin{proof}
Fix $A \in \mathbf{a}$. We will show that there is $B \leq_m A$ such that $\gamma(B) \leq \frac{1}{2}$. The idea is that each bit of $A$ will be copied many times by $B$, so that if we have a computable approximation to $B$ which is correct more than half the time, we can correctly guess at the bits of $A$ with only finitely many errors.

For each $n \in \omega$, define $I_n = [n!,(n+1)!)$. Let
\[ B = \bigcup_{n \in A} I_n.\]
It is easy to see that $B \leq_m A$. Suppose towards a contradiction that $\gamma(B) > \frac{1}{2}$. Let $R$ be a computable approximation to $B$, with $\underline{\rho}(B \leftrightarrow R) > \frac{1}{2}$.
Fix $N$ and $p$ such that for all $n \geq N$,
\[ \frac{|(B \leftrightarrow R) \cap [0,n)|}{n} \geq p > \frac{1}{2}. \]
Increasing $N$, we may assume that $\frac{1}{2} + \frac{1}{N} < p$.

Given $n \geq N$, we will show how to decide computably whether $n \in A$. We claim that $n \in A$ if and only if more than half of the elements of $I_n$ are in $R$. Indeed, suppose that $n \in A$, but at most half of the elements of $I_n$ are in $R$. Then
\[ \frac{|(B \leftrightarrow R) \cap [0,(n+1)!)|}{(n+1)!} \leq \frac{n! + \frac{(n+1)!-n!}{2}}{(n+1)!} = \frac{1}{2} + \frac{1}{2(n+1)} < p.\]
This is a contradiction. So if $n \in A$, more than half of the elements of $I_n$ are in $R$. A similar argument works when $n \notin A$.
\end{proof}

The Gamma question, from \cite{ACDJL}, asks whether the value of $\Gamma_T$ can be strictly between $0$ and $1/2$. Monin \cite{M} has recently given a solution to the Gamma question: The only possible values of $\Gamma_T$ are $0$, $1/2$, and $1$.

Our work grew out of an independent attempt to answer the Gamma question. If we replace Turing reducibility by many-one reducibility, we get a Gamma function on many-one degrees:
\begin{definition}\label{ccbm}
The \textit{coarse computability bound} of an $m$-degree $\mathbf{a}$ is
\[ \Gamma_m(\mathbf{a}) := \inf\{\gamma(A) \mid \text{$A \leq_m \mathbf{a}$} \}. \]
It suffices to take the infimum only over sets in $\mathbf{a}$.
\end{definition}
The proof of Proposition \ref{prop:1/2} used a many-one reduction, so it still holds for $m$-degrees. Moreover, the examples in Theorem \ref{thm:poss} yield examples of $m$-degrees with $\Gamma_m$ being $0$, $1/2$, and $1$. Thus, we can ask the Gamma question for $m$-degrees: Can the value of $\Gamma_m$ be strictly between $0$ and $1/2$? Interestingly, we get the opposite answer from Monin's: Every $p \in [0,1/2]$ is a possible value of $\Gamma_m$.

\begin{theorem}\label{thm:main}
Fix $0 \leq p \leq \frac{1}{2}$. There is an $m$-degree $\mathbf{a}$ with $\Gamma_m(\mathbf{a}) = p$.
\end{theorem}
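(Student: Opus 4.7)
The plan is to handle $p=0$ and $p=\tfrac{1}{2}$ separately: for $p=0$, any $m$-degree in a hyperimmune Turing degree works by Theorem~\ref{thm:poss}(4); for $p=\tfrac{1}{2}$, any $m$-degree in a non-computable computably traceable Turing degree works, since $\Gamma_m\ge\Gamma_T=\tfrac{1}{2}$ (as $\le_m$ implies $\le_T$) while $\Gamma_m\le\tfrac{1}{2}$ by Proposition~\ref{prop:1/2}. So fix $p\in(0,\tfrac{1}{2})$ and construct $\mathbf{a}$ directly.

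I would look for a set $A$ that combines a computable ``skeleton'' $P \subseteq \omega$ of density $p$ on which $A$ is identically $1$ (this will force $\gamma(B) \ge p$ for every $B \equiv_m A$ via a pullback of $P$) with an unpredictable filler on $\omega \setminus P$ coming from a set $A_0$ with $\gamma(A_0)=0$ that keeps $\gamma(A)$ down to $p$. Concretely, I would partition $\omega$ into rapidly growing blocks $I_n$, take $P_n \subseteq I_n$ computable with $|P_n|/|I_n|\to p$ and set $P := \bigcup_n P_n$. For the upper bound on $\Gamma_m(\mathbf{a})$ it suffices to verify $\gamma(A)=p$: the approximation $R=P$ gives $\underline{\rho}(A \leftrightarrow R) = p$, and for the matching upper bound an arbitrary computable $R$ is analyzed by splitting off its contributions from $P$ (bounded by $\rho(P)=p$) and from the filler (where it inherits $\gamma(A_0)=0$), producing a subsequence along which the liminf of agreement is at most $p$.

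The main obstacle is the lower bound $\Gamma_m(\mathbf{a})\ge p$, i.e., that every $B \equiv_m A$ has $\gamma(B) \ge p$; equivalently, that no set with $\gamma<p$ is $m$-equivalent to $A$. The naive ``$A=A_0\cup T$'' construction (with $T$ computable of density $p$) has $\gamma(A)=p$ but is $m$-equivalent to $A_0$, which collapses $\Gamma_m$ to $0$. So $A$ must be designed so that $A_0$ cannot be recovered from $A$ by a many-one reduction without using the skeleton $P$: the bits of $A_0$ must be ``smeared'' across $A$ via a computable coding that no many-one reduction can invert directly.

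Given $B \equiv_m A$ witnessed by $f : B \le_m A$ and $g : A \le_m B$, the computable pullback $R_B := f^{-1}(P)$ is a subset of $B$ (because $A$ is identically $1$ on $P$); if $\underline{\rho}(R_B) \ge p$ then $R_B$ already witnesses $\gamma(B) \ge p$. The hard case is when $f$ routes $\omega$ mostly into the filler, so that $\underline{\rho}(R_B)<p$; here the reverse reduction $g$ is brought in, using that $B(g(n)) = 1$ for every $n \in P$, so that a suitable computable subset of $g(P)$ combined with $R_B$ should still give a computable density-$\ge p$ subset of $B$ on which $B$ is correctly predicted. The delicate combinatorial step, which is the essential use of $m$-equivalence rather than Turing equivalence and the core of the proof, is to argue that $f^{-1}(P)$ and $g(P)$ cannot simultaneously be thin.
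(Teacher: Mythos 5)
Your handling of the endpoints is only half right: for $p=\tfrac12$ the argument ($\Gamma_m \geq \Gamma_T$ since $\leq_m$ implies $\leq_T$, plus Proposition~\ref{prop:1/2}) is fine, but for $p=0$ the inequality $\Gamma_m \geq \Gamma_T$ runs in the wrong direction, so hyperimmunity of the Turing degree gives no upper bound on $\Gamma_m$ of an arbitrary $m$-degree inside it. The fix is easy (take the $m$-degree of a single set $A$ with $\gamma(A)=0$; then $\Gamma_m \leq \gamma(A)=0$), so this is minor.

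The genuine gap is in the core case $0<p<\tfrac12$: the proposed architecture is self-defeating, and the combinatorial claim you defer to the end is false for it. Suppose $A$ is identically $1$ on a computable set $P$ of density $p$, and let $q$ be the increasing enumeration of $\omega\setminus P$ and $B:=\{x \mid q(x)\in A\}$ the filler read off in order, which by your design has $\gamma(B)=0$. Then $B\leq_m A$ via $q$, and $A\leq_m B$ by sending $y\notin P$ to $q^{-1}(y)$ and every $y\in P$ to one fixed $z_1$ with $B(z_1)=1$ (such $z_1$ exists, since $\gamma(B)=0$ forces $B\neq\varnothing$). So $B\equiv_m A$ and the $m$-degree of $A$ has $\Gamma_m\leq\gamma(B)=0$, not $p$: a many-one reduction simply strips the skeleton, which is exactly the collapse you acknowledge, and the ``smearing'' needed to prevent it is the entire content of the theorem, not a detail to be filled in. Moreover this same pair of reductions refutes your key step: $f^{-1}(P)=\varnothing$ and $g(P)=\{z_1\}$ are simultaneously thin (and in general $g(P)$ is only c.e., not computable, so it could not serve as part of a computable approximation even when large). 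More fundamentally, approximating $B$ by pullbacks of computable subsets of $A$ cannot work against highly non-injective reductions (consider $f$ constant). The paper's proof instead (i) approximates $B_e=f_e^{-1}(A)$ by a set built from the pairing structure of $f_e$ itself --- for each value hit twice on an interval, put one preimage in and one out, guaranteeing agreement $\tfrac12\geq p$ there no matter what $A$ is, which is where $p\leq\tfrac12$ enters --- and (ii) constructs $A$ non-effectively in stages against all total reductions and all computable sets simultaneously, using hypergeometric tail bounds to place the $0$'s of $A$ so that the injective part of every reduction catches a $p$-fraction of them while still leaving room to disagree with $C_\ell$. A fixed ``skeleton plus filler'' set verified after the fact does not have, and cannot have, these properties.
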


Versions of the Gamma question for weaker reducibilities have already been asked in the literature: In \cite{H}, Hirschfeldt asked the Gamma question for truth table degrees. (Monin's answer to the Gamma question for Turing degrees also yields the same answer for truth table degrees: The value of $\Gamma_{tt}$ cannot be strictly between $0$ and $1/2$.) An interesting question is what happens for intermediate reductions, such as bounded truth table reductions. Do such reductions have enough computational power to apply the theorems from coding theory used by Monin, or are they sufficiently simple to allow a construction such as the one we use for many-one degrees?

\section{Background on the Hypergeometric Distribution}

The proof of Theorem \ref{thm:main} will make use of a probabilistic argument about random variables following a hypergeometric distribution. We will quickly review this distribution here. (See \cite[p. 52]{Hoel}.)

The hypergeometric distribution is the discrete probability distribution of the number of successes in $K$ draws, without replacement, from a population of size $N$ which contains $n$ successes. For example, one might think of red and blue marbles in a box; if there are $N$ marbles, $n$ of which are red, and we randomly select $K$ marbles, the number of red marbles we pick will follow a hypergeometric distribution. We denote the hypergeometric distribution by $H(K,N,n)$ and, if $X \sim H(K,N,n)$, we have
\[Pr(X = x) = \frac{\binom{n}{x}\binom{N-n}{K-x}}{\binom{N}{K}}. \]
Our particular application will be to have a set $U$ of size $N$, with a subset $V$ of size $n$. If $p > q$ are real numbers in $[0,1]$, we will randomly pick from $U$ a set $S$ consisting of about $pN$ elements. We want to choose, in $S$, at least $qn$ elements which are also in $V$. Since $p > q$, it is reasonable to think that we should often get enough elements of $V$. Intuitively, the larger $N$ and $n$ are, the more likely we are to get want we want. Precise bounds are given by the following theorem:

\begin{theorem}[\cite{Hoeffding}, see also \cite{Chvtal}]
Let $X \sim H(K,N,n)$ where $p = K / N > q$. Let $t = p-q$. Then
\[ Pr(X \leq qn) \leq \exp(-2t^2n).\] 
\end{theorem}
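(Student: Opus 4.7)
The plan is to apply the Chernoff bound via the moment generating function method, with Hoeffding's key observation reducing sampling without replacement to sampling with replacement. First, I would write $X = Y_1 + \cdots + Y_n$ where $Y_j$ is the indicator that the $j$-th success element in the population is among the $K$ drawn items; each $Y_j$ is Bernoulli with parameter $p = K/N$, so $E[X] = pn$. For any $\lambda > 0$, Markov's inequality gives
\[ Pr(X \leq qn) = Pr(e^{-\lambda X} \geq e^{-\lambda qn}) \leq e^{\lambda qn} \cdot E[e^{-\lambda X}]. \]

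The difficulty is that the $Y_j$ are negatively correlated rather than independent. To handle this, I would invoke Hoeffding's convex-domination lemma: if $\tilde{Y}_1, \ldots, \tilde{Y}_n$ are i.i.d.\ Bernoulli($p$), corresponding to sampling with replacement, then for every convex $\phi$,
\[ E[\phi(Y_1 + \cdots + Y_n)] \leq E[\phi(\tilde{Y}_1 + \cdots + \tilde{Y}_n)]. \]
Since $z \mapsto e^{-\lambda z}$ is convex, this yields $E[e^{-\lambda X}] \leq E[e^{-\lambda \tilde{X}}]$ where $\tilde{X} \sim B(n,p)$.

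From here the argument is the standard binomial Chernoff bound. Hoeffding's lemma for a mean-zero random variable supported in an interval of length $1$ (here $\tilde{Y}_j - p \in [-p,\, 1-p]$) gives $E[e^{-\lambda(\tilde{Y}_j - p)}] \leq e^{\lambda^2 / 8}$, so by independence
\[ Pr(X \leq qn) \leq \exp\!\left( \lambda q n - \lambda p n + \tfrac{n \lambda^2}{8} \right) = \exp\!\left( -\lambda t n + \tfrac{n \lambda^2}{8} \right). \]
Optimizing at $\lambda = 4t$ produces the exponent $-2 t^2 n$ and yields the claimed bound.

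The main obstacle is the convex-domination step. Its proof (due to Hoeffding) is a symmetrization and conditioning argument showing that the joint distribution of the without-replacement indicators $(Y_1, \ldots, Y_n)$ is dominated in the convex order by the i.i.d.\ product law, essentially because one can realize the without-replacement sample as a random permutation of a deterministic multiset while the with-replacement sample is a freer mixture. Once this lemma is granted, the remainder is just Markov's inequality plus a one-variable optimization, and no further combinatorial information about binomial coefficients or the hypergeometric mass function is needed.
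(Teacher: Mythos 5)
Your proof is correct; note that the paper does not prove this statement at all---it is quoted as a known theorem from the cited references---and what you have written is essentially Hoeffding's original argument from the first of those references. You handle the one genuinely delicate point correctly: the exponent must be $-2t^2 n$ with $n$ the number of success elements (this independence from $N$ and $K$ is exactly what the paper needs), and your decomposition $X = Y_1 + \cdots + Y_n$, with $Y_j$ the indicator that the $j$-th success element lands in the random $K$-set, is the right way to get it, since $(Y_1,\ldots,Y_n)$ has the same joint law as $n$ draws without replacement from an urn of $K$ ones and $N-K$ zeros, so Hoeffding's convex-domination theorem applies with sample size $n$ and success probability $p = K/N$. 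From there your Markov--MGF computation and the optimization at $\lambda = 4t$ are standard and correct. For comparison, the second cited reference (Chv\'atal) proves the same bound by a purely combinatorial estimate on sums of products of binomial coefficients, avoiding the convex-order lemma entirely; your route buys generality and brevity at the cost of taking Hoeffding's domination lemma as given, which is a reasonable trade since that lemma is precisely what the citation is for.
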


\noindent It will be important that this bound does not depend on $N$ (though of course, as $n$ becomes bigger, $N$ will as well).

\section{Proof of the Main Theorem}

We will now prove Theorem \ref{thm:main}.

\begin{proof}[Proof of Theorem \ref{thm:main}]
Fix $0 < p < \frac{1}{2}$. We will find a set $A$ whose $m$-degree $\mathbf{a}$ has $\Gamma_m(\mathbf{a}) = p$. Fix $(C_\ell)_{\ell\in \omega}$ a non-effective list of the computable sets, in which each set is repeated infinitely many times.

We will ensure that $\gamma(A) \leq p$ having, for each computable set $C_\ell$,
\[ \liminf_{n \to 0} \frac{|(A \leftrightarrow C_\ell) \cap [0,n)|}{n} \leq p.\]
We will accomplish this by making sure that for each $\ell$, there are infinitely many values of $n$ for which we force $A$ to differ from $C_\ell$ on a large portion of $[0,n)$. This will force $\Gamma_m(\mathbf{a}) \leq p$. In fact, since each computable set appears infinitely many times in the list $(C_\ell)_{\ell \in \omega}$, it suffices to find, for each $\ell$, a single $n \geq \ell$ with 
\[ \frac{|(A \leftrightarrow C_\ell) \cap [0,n)|}{n} \leq p + \epsilon_\ell \]
where $\epsilon_\ell \to 0$.

To have $\Gamma_m(\mathbf{a}) \geq p$, we must make sure that for each set $B$ which is $m$-reducible to $A$ via $f$, $\gamma(B) \geq p$. (One such $B$ will be $A$ itself via the identity reduction.) We will think of $A$ as being approximated by $A^* = \varnothing$. (So we want the bits of $A$ to be $0$'s with density at least $p$.) Thus we might initially try to approximate $B$ by $B^* = \varnothing$, which is what we get by applying the reduction $f$ to $A^*$. This will not work, for the reason that $f$ could be highly non-injective. For example, if $f$ maps every element to the same element $y$, then we could have $A = \{y\}$, which is very well approximated by $A^*$, but applying the reduction $f$ we get $B = \omega$ which is very badly approximated by $B^*$. This is where we will exploit the fact that $p \leq 1/2$. Say $z = f(x) = f(y)$. Then if we put $x \in B^*$ but $y \notin B^*$, we are guaranteed to be right about at least one of the two; for if $z \in A$, then $x,y \in B$ in which case we were right about $x$, and if $z \notin A$, then $x,y \notin B$, in which case we were right about $y$. If we manage this in the right way, we will be correct with density $1/2$. (The proof of Proposition \ref{prop:1/2} shows that we must do something like this.)

More formally, let $(f_e \colon \omega \to \omega)_{e \in \omega}$ be a (non-effective) list of the total many-one reductions.
For each $e$, let
\[B_e = f_e^{-1}(A) := \{ x \mid f_e(x) \in A \}.\] 
We define a computable approximation $B_e^*$ to $B_e$ as follows.
First, let $I_1,I_2,I_3,\ldots$ be the consecutive intervals of length one, two, three, and so on. (So $I_1 = \{0\}$, $I_2 = \{1,2\}$, $I_3 = \{3,4,5\}$, etc.)
For each interval $I_n$, let $J_{n,e} = f_e(I_n)$ be the multiset image\footnote{Recall that multisets are a generalization of sets to allow multiple instances of the same element. By the multiset image, we mean that we want to count the number of pre-images of an element in the range.} of $I_n$ under $f_e$, and write $J_{n,e} = J_{n,e}^{*} \uplus 2 J_{n,e}^{**}$ where each element has multiplicity one in $J_{n,e}^{*}$. So, for example, if $f_e(3)=0$, $f_e(4) = 0$, and $f_e(5) = 1$, then $f_e(I_3) = \{0,0,1\} = \{1\} \uplus 2\{0\}$; if $J_{8,e} = f_e(I_8) = \{0,0,0,0,0,1,1,2\}$, then $J_{8,e} = \{0,2\} \uplus 2\{0,0,1\}$. We can write $I_n$ as a disjoint union $I_{n,e}^{*} \cup I_{n,e}^{**,1} \cup I_{n,e}^{**,2}$, where $f_e(I_{n,e}^{**,1}) = f_e(I_{n,e}^{**,2}) = J_{n,e}^{**}$ and $f_e(I_{n,e}^{*}) = J_{n,e}^{*}$. Then let $B^* = \bigcup_{n} I_{n,e}^{**,1}$. The simplicity with which we can describe $B^*$ is where we take advantage of the fact that we are considering many-one reductions rather than Turing reductions. We will have that, for some decreasing positive sequence $\gamma_n \to 0$, and for all $n$,
\begin{equation*} \frac{|(B_e \leftrightarrow B_e^*) \cap I_n|}{n} \geq p - \gamma_n. \tag{$\dagger$}\label{assum}\end{equation*}
Since the length of the intervals $I_n$ are increasing slowly, this will suffice to get $\gamma(B_e) \geq p$.

\begin{claim}\label{clm1}
Assuming \eqref{assum}, $\gamma(B_e) \geq p$.
\end{claim}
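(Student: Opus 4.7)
The plan is to show that $B_e^*$ itself witnesses $\underline{\rho}(B_e \leftrightarrow B_e^*) \geq p$, which, since $B_e^*$ is computable, immediately yields $\gamma(B_e) \geq p$.

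First I would fix $m$ and locate it in the block structure. Writing $m = \binom{n}{2} + j$ with $0 \leq j \leq n$, the prefix $[0,m)$ contains all of $I_1, \ldots, I_{n-1}$ (whose lengths are $1, 2, \ldots, n-1$ and which together span $[0, \binom{n}{2})$) together with a prefix of $I_n$ of length $j$. Discarding the nonnegative contribution from $I_n \cap [0,m)$ and applying $(\dagger)$ on each $I_k$ with $k < n$, we get
\[ \bigl|(B_e \leftrightarrow B_e^*) \cap [0,m)\bigr| \;\geq\; \sum_{k=1}^{n-1} (p-\gamma_k)\, k \;=\; p\binom{n}{2} - \sum_{k=1}^{n-1} k\gamma_k. \]

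Dividing by $m$ and using $\binom{n}{2} \leq m \leq \binom{n+1}{2}$ gives
\[ \frac{\bigl|(B_e \leftrightarrow B_e^*) \cap [0,m)\bigr|}{m} \;\geq\; p \cdot \frac{\binom{n}{2}}{\binom{n+1}{2}} \;-\; \frac{1}{\binom{n}{2}}\sum_{k=1}^{n-1} k\gamma_k. \]
As $m \to \infty$, also $n \to \infty$, and the first fraction tends to $1$. For the second term, the key observation is a Cesàro-style estimate: since $\gamma_k \to 0$, given $\varepsilon > 0$ we can pick $K$ with $\gamma_k < \varepsilon$ for $k \geq K$, whence
\[ \sum_{k=1}^{n-1} k\gamma_k \;\leq\; \sum_{k=1}^{K-1} k\gamma_k + \varepsilon \binom{n}{2}, \]
and dividing by $\binom{n}{2}$ yields a bound that tends to $\varepsilon$; since $\varepsilon$ was arbitrary, this term is $o(1)$. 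Taking the liminf completes the argument.

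The proof has no real obstacle — it is essentially bookkeeping — but the two points requiring care are (i) handling the trailing partial interval $I_n \cap [0,m)$ (resolved by noting interval lengths grow linearly so $\binom{n+1}{2}/\binom{n}{2} \to 1$), and (ii) the averaging argument showing that the early intervals where $\gamma_k$ has not yet become small contribute negligibly to the weighted sum. Both rely crucially on the fact that $|I_n| = n$ grows, which is what permits the $(\dagger)$-bounds, holding only per-interval with a slowly vanishing slack, to aggregate into a global lower density bound of $p$.
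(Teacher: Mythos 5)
Your proof is correct and follows essentially the same route as the paper: decompose $[0,m)$ into the complete intervals $I_1,\dots,I_{n-1}$ plus a discarded partial interval, sum the bounds from \eqref{assum}, and use that $|I_n|=n$ is negligible compared to $\sum_{k<n}k$ so the correction factor tends to $1$. The only (harmless) difference is that you make the C\`esaro estimate on $\sum_k k\gamma_k$ explicit, whereas the paper absorbs it into the choice of the thresholds $K_m$ (using that $\gamma_n$ is decreasing), so your version in fact needs only $\gamma_n \to 0$.
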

\begin{proof}
Since
\[ \frac{|(B_e \leftrightarrow B_e^*) \cap I_m|}{m} \geq p - \gamma_m,\]
for each $m$, there is $K_m$ such that for all $K \geq K_m$,
\[ \frac{|(B_e \leftrightarrow B_e^*) \cap (\bigcup_{n \leq K} I_n)|}{\sum_{n \leq K} n} \geq p - \gamma_m.\]
(Assume that the sequence $K_m$ is strictly increasing in $m$.)
Given $m$, let $x \in I_{N+1}$ for some $N \geq K_m$, with $N$ sufficiently large that $\frac{N-1}{N+1} (p - \gamma_m) \geq p - 2\gamma_m$. Then
\[ |(B_e \leftrightarrow B_e^*) \cap (\bigcup_{n < N} I_n)| \leq |(B_e \leftrightarrow B_e^*) \cap [0,x]|. \]
So
\[ \frac{|(B_e \leftrightarrow B_e^*) \cap [0,x]|}{x+1} \geq \frac{(\sum_{n < N} n)}{x+1} \frac{|(B_e \leftrightarrow B_e^*) \cap (\bigcup_{n < N} I_n)|}{(\sum_{n < N} n)} \geq \frac{(\sum_{n < N} n)}{x+1}(p - \gamma_m).\]
Now
\[ \frac{(\sum_{n < N} n)}{x+1} \geq \frac{(\sum_{n < N} n)}{(\sum_{n \leq N} n)} = \frac{N-1}{N+1}.\]
By choice of $N$, $\frac{N-1}{N+1} (p - \gamma_m) \geq p - 2\gamma_m$, and so
\[ \frac{|(B_e \leftrightarrow B_e^*) \cap [0,x]|}{x+1} \geq p - 2\gamma_m.\]
Thus
\[ \gamma(B_e) = \liminf_{x \to \infty} \frac{|(B_e \leftrightarrow B_e^*) \cap [0,x)|}{x} \geq p.\qedhere\]
\end{proof}

Note that one of the reductions $f_e$ is the identity reduction, so for that $e$, $B_e = A$. Thus $\gamma(A) = p$. Hence $\Gamma_m(A) = p$.

\medskip{}

We now construct $A$ as the concatenation of infinitely many finite binary strings: $A = \alpha_1 \concat \beta_1 \concat \alpha_2 \concat \beta_2 \concat \alpha_3 \concat \dots$. Each $\alpha_i$ will consist entirely of $0$'s.\footnote{We should expect to have long sequences of zeros. Since $\gamma(A) = p \leq 1/2$, $A \leftrightarrow \omega$ should have density at most $p$. But that means that the upper destiny of $A \leftrightarrow \varnothing$ should be at least $1 - p \geq 1/2$, so there should be initial segments of $A$ with many $0$'s.} Note that we have already defined each of the approximations $B_e^*$ computably; our construction of $A$ can be non-effective. Fix $(\epsilon_\ell)_{\ell \in \omega}$ a decreasing sequence of positive reals converging to zero (and assume that the $\epsilon_\ell$ are small relative to $p$, so that for example $p-\epsilon_\ell > 0$ and $p + \epsilon_\ell < 1/2$). Given $\alpha_1 \concat \beta_1 \concat \alpha_2 \concat \beta_2 \concat \cdots \concat \beta_\ell$, we must define $\alpha_{\ell+1}$ and $\beta_{\ell + 1}$. Let $L$ be the length of $\alpha_1 \concat \beta_1 \concat \alpha_2 \concat \beta_2 \concat \cdots \concat \beta_\ell$.

At stage $\ell$, we consider only the reductions $f_e$ for $e \leq \ell$. We have two competing desires. First, we want to define $\beta_{\ell+1}$ so that $A$ has very little agreement with $C_\ell$ on this part of their domain. Second, we want $\beta_{\ell+1}$ to have many $0$'s, particularly on elements in the ranges of the $f_e$, so that the sets $B_e$ have many $0$'s. Our probabilistic argument using the hypergeometric distribution will show that we can satisfy both of these desires at the same time.

At stage $\ell$, we will divide the intervals $I_n$ which make up the domain of $B_e$ into three types, depending on their size: small, medium, and large. An interval will be of medium size for exactly one stage $\ell$; at earlier stages, it will be large, and at later stages, it will be small. Small intervals are too small to use the bounds for the hypergeometric distribution while keeping any errors under $\epsilon_\ell$; we will ensure that their images, under the $f_e$, look only at the values in $\alpha_{\ell + 1}$ and earlier. The medium and large intervals are large enough to use the bounds for the hypergeometric distribution. The difference between the medium and large intervals will not show up until the verification. Essentially, large intervals are so big that the values of $\beta_{\ell+1}$ do not affect the agreement of $B_e$ with $B_e^*$.

Choose $M$ sufficiently large (and bigger than the value of $M$ at the previous stage) so that:
\begin{enumerate}
	\item $\ell \sum_{n \geq M} \exp(-\epsilon_\ell^3 n) < 1$ and
	\item if $M' \geq M$ and $M' = 2m_1 + m_2$, then $\frac{m_1 + pm_2 - L}{M'} \geq p - \epsilon_\ell$.
\end{enumerate}
Since $p \leq 1/2$, (2) holds for $M \geq L / \epsilon_\ell$. We can get (1) to hold for sufficiently large $M$ because the infinite series
$\ell \sum_{n \geq 1} \exp(-\epsilon_\ell^3 n)$ converges. $M$ is the cutoff between the small and medium intervals. (1) says that $M$ is big enough to apply the tail bounds for the hypergeometric distribution, and (2) says that $M$ is large enough compared to $L$ that we do not have to worry about what we have already chosen for $\alpha_1 \concat \beta_1 \concat \alpha_2 \concat \beta_2 \concat \cdots \concat \beta_\ell$.

Choose $K$ sufficiently large so that each element of $J_{i,e}$, for $i < M$ and $e \leq \ell$, is less than $L + K$. Set $\alpha_{\ell+1} = 0^K$. Thus every small interval $I_{i,e}$ will see, under the reduction $f_e$, only $\alpha_1 \concat \beta_1 \concat \alpha_2 \concat \beta_2 \concat \cdots \concat \beta_\ell \concat \alpha_{\ell + 1}$. Our choice of $\beta_{\ell+1}$ will not affect the reduction on these intervals.

Then choose $N$ sufficiently large that so that $\frac{L + K + pN + \epsilon_\ell N}{L+K+N} \leq p + 2\epsilon_\ell$ and $\frac{\lfloor (p + \epsilon_\ell) N \rfloor}{N} - p \geq \epsilon_\ell / 2$. We will have $|\beta_{\ell + 1}| = N$. $N$ is large enough that, by making $\beta_{\ell+1}$ agree with $C_\ell$ at density at most $p$, we can make $A$ agree with $C_\ell$ at density at most $p + \epsilon_\ell$. This $N$ will be the $N$ in our applications of the tail bounds for the hypergeometric distribution. Because $K$ was chosen dependent on $M$, and $N$ was chosen dependent on $K$, it is important here that these tail bounds depend only on $n$ (which will be bounded below by a fixed multiple of $M$). 

As before, write $J_{n,e} = f_e(I_n)$ as $J_{n,e}^{*} \uplus 2 J_{n,e}^{**}$ where each element has multiplicity one in $J_{n,e}^*$. Recall that, by choice of $B_e^*$, we are guaranteed to have $B_e$ agree with $B_e^*$ on half of the elements in $I_{n,e}$ which map to $J_{n,e}^{**}$. If the size of $J_{n,e}^{**}$ is very close to the size of $J_{n,e}$, then we are guaranteed to get agreement of close to $1/2$ on $I_{n,e}$, and we do not have to worry about $I_{n,e}$. The other option is that $J_{n,e}^{*}$ is very large, in which case we get small tail bounds for the hypergeometric distribution. More formally, if $n \geq M$, then either $\frac{|J_{n,e}^{**}|}{|J_{n,e}|} \geq \frac{1}{2} - \epsilon_\ell$ or $|J_{n,e}^*| > 2 \epsilon_\ell |J_{n,e}|$.
Indeed, if we are not in the former case, then
\[ 2 |J_{n,e}^{**}| < |J_{n,e}| - 2 \epsilon_\ell |J_{n,e}|.\]
Rearranging, and using the fact that $|J_{n,e}| = |J_{n,e}^*| + 2 |J_{n,e}^{**}|$, we get
\[ |J_{n,e}^*| > 2 \epsilon_\ell |J_{n,e}|.\]
Let $\Omega$ index the pairs $n \geq M$ and $e \leq \ell$ for which $|J_{n,e}^*| > 2 \epsilon_\ell |J_{n,e}|$.

\begin{claim}
There is a set $S \subseteq [L+K,L+K+N)$ with $\frac{|S|}{N} \leq p + \epsilon_\ell$ such that for each $(n,e) \in \Omega$,
\[ \frac{|S \cap J_{n,e}^{*}| + |J_{n,e}^{*} \setminus [L+K,L+K+N)|}{|J_{n,e}^{*}|} \geq p. \]
\end{claim}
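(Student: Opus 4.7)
The plan is to construct $S$ probabilistically: sample $S$ uniformly from the subsets of $[L+K, L+K+N)$ of size exactly $\lfloor (p+\epsilon_\ell) N \rfloor$. The density bound $|S|/N \leq p+\epsilon_\ell$ then holds automatically, so the task reduces to showing that with positive probability the agreement condition holds simultaneously for every $(n,e)\in \Omega$; any deterministic $S$ for which it does will witness the claim.

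For a fixed $(n,e) \in \Omega$, I would write $V := J_{n,e}^{*} \cap [L+K, L+K+N)$ and $W := J_{n,e}^{*} \setminus [L+K, L+K+N)$, so the condition becomes $|S \cap V| \geq p|V| - (1-p)|W|$. The analysis splits into two cases. If $|V| \leq (1-p)|J_{n,e}^{*}|$, equivalently $|W| \geq p|J_{n,e}^{*}|$, then the right-hand side is non-positive and the inequality holds trivially. Otherwise, using $p \leq 1/2$, the defining inequality $|J_{n,e}^{*}| > 2\epsilon_\ell |J_{n,e}|$ of $\Omega$, and $|J_{n,e}| \geq |I_n| = n$, we obtain
\[ |V| > (1-p)|J_{n,e}^{*}| \geq \tfrac{1}{2}\cdot 2\epsilon_\ell |J_{n,e}| \geq \epsilon_\ell n. \]

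In this nontrivial case, $|S \cap V|$ is hypergeometric with parameters $(\lfloor(p+\epsilon_\ell)N\rfloor, N, |V|)$. By the choice of $N$, the sampling proportion $\lfloor (p+\epsilon_\ell)N\rfloor / N$ is at least $p + \epsilon_\ell/2$, while the threshold $q := p - (1-p)|W|/|V|$ is at most $p$, giving slack $t \geq \epsilon_\ell/2$. The Hoeffding--Chvátal bound from Section 2 then yields
\[ \Pr\bigl(|S\cap V| \leq q|V|\bigr) \leq \exp(-2 t^2 |V|) \leq \exp(-\epsilon_\ell^3 n/2). \]
Taking a union bound over $e \leq \ell$ and $n \geq M$, the total failure probability is at most $\ell \sum_{n \geq M} \exp(-\epsilon_\ell^3 n/2)$; condition (1) on $M$ (absorbing the factor of $2$ by a cosmetic sharpening of the constant) drives this strictly below $1$, so a valid deterministic $S$ exists.

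The main obstacle is that the hypergeometric tail bound is only effective when $|V|$ is linear in $n$, and there is no a priori reason the reduction $f_e$ should spread $J_{n,e}^{*}$ evenly across the window $[L+K, L+K+N)$. The case split resolves this precisely: when $V$ is small, the requirement is already met by elements of $W$ lying outside the current window; when $V$ is large, it is proportional to $n$, placing Hoeffding in exactly the regime where condition (1) on $M$ closes the union bound.
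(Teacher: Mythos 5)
Your proof is correct, and its engine is the same as the paper's: draw $S$ uniformly among the subsets of $[L+K,L+K+N)$ of size $\lfloor (p+\epsilon_\ell)N\rfloor$, bound the failure probability for each $(n,e)\in\Omega$ by the hypergeometric tail bound of Section 2 with slack coming from $\lfloor(p+\epsilon_\ell)N\rfloor/N - p \geq \epsilon_\ell/2$, and finish with a union bound against condition (1) on $M$. The genuine difference is how you treat the part of $J_{n,e}^*$ lying outside the window. The paper observes that replacing an element of $J_{n,e}^*$ outside $[L+K,L+K+N)$ by one inside can only decrease the target quantity, so it may assume $J_{n,e}^*\subseteq[L+K,L+K+N)$ (the case $J_{n,e}^*\supseteq[L+K,L+K+N)$ being immediate), and then applies the tail bound with success-population size $|J_{n,e}^*| > 2\epsilon_\ell n$, yielding $\exp(-\epsilon_\ell^3 n)$. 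You instead split on the size of $W=J_{n,e}^*\setminus[L+K,L+K+N)$: if $|W|\geq p|J_{n,e}^*|$ the requirement is vacuous, and otherwise $|V| > (1-p)|J_{n,e}^*| \geq \epsilon_\ell n$ and the bound is applied to $V$ directly. This is slightly more direct but, as you concede, only gives exponent $\epsilon_\ell^3 n/2$, so with condition (1) exactly as stated your union bound does not quite close. That is easily repaired: either strengthen (1) cosmetically (harmless, since $M$ is chosen in the construction), or notice that in your nontrivial case the threshold is $q = p-(1-p)|W|/|V|$, hence $t \geq \epsilon_\ell/2 + (1-p)|W|/|V|$, and a short computation using $1-p\geq 1/2$ and $|J_{n,e}^*| > 2\epsilon_\ell n$ shows $2t^2|V| \geq \epsilon_\ell^3 n$ after all, recovering the paper's constant. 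So your route trades the paper's worst-case monotonicity reduction for a case split at the cost of a constant; there is no real gap.
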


The set $S$ is a set on which $A$ will be forced to have $0$'s. On the other elements of $[L+K,L+K+N)$, $A$ will have the freedom to be different from $C_\ell$. This claim says that we can choose $S$ to simultaneously have $S$ small enough that $A$ can be sufficiently different from $C_\ell$ and large enough that the reductions $f_e$ find sufficiently many $0$'s in their ranges.

\begin{proof}
First, note that if we modify $J_{n,e}^{*}$ by removing an element which is outside of the interval $[L+K,L+K+N)$ and adding a new element which is inside of this interval, for any fixed set $S \subseteq [L+K,L+K+N)$ the quantity
\[ \frac{|S \cap J_{n,e}^{*}| + |J_{n,e}^{*} \setminus [L+K,L+K+N)|}{|J_{n,e}^{*}|} \]
can only decrease. Also, if $J_{n,e}^{*} \supseteq [L+K,L+K+N)$, then for any choice of $S$ with $\frac{|S|}{N} \geq p$ we will have
\[ \frac{|S \cap J_{n,e}^{*}| + |J_{n,e}^{*} \setminus [L+K,L+K+N)|}{|J_{n,e}^{*}|} \geq p \]
as desired. So we may assume that, for each $(n,e)$, $J_{n,e}^* \subseteq [L+K,L+K+N)$.

We give a probabilistic argument that the desired set $S$ exists. Imagine that we randomly pick a set $S$ of size $r = \lfloor (p + \epsilon_\ell) N \rfloor$. For each $(n,e) \in \Omega$, let $X_{n,e}$ be the random variable $|S \cap J_{n,e}^*|$; we have $X_{n,e} \sim H(r,N,|J_{n,e}^*|)$. Let $t = \frac{r}{N} - p$. By choice of $N$, we have $t \geq \epsilon_\ell / 2$. So by the tail bounds for the hypergeometric distribution, the probability that for some fixed $(e,n) \in \Omega$, $|S \cap J_{n,e}^*| \leq p |J_{n,e}^*|$ is bounded above by
\[ Pr(X_{e,n} \leq p |J_{n,e}^*|) \leq \exp(-2 t^2 |J_{n,e}^*|) \leq \exp(-\epsilon_\ell^3 |J_{n,e}|) = \exp(-\epsilon_\ell^3 n).\]
(Note that this holds even if $p |J_{n,e}^*|$ is not an integer.) So the probability that for all $(e,n) \in \Omega$, $|S \cap J_{n,e}^*| \leq p |J_{n,e}^*|$, is bounded above by
\[ \sum_{(e,n) \in \Omega} \exp(-\epsilon_\ell^3 n) \leq \ell \sum_{n \geq M} \exp(-\epsilon_\ell^3 n) < 1.\]
So there is a non-zero probability that we pick a set $S$ as desired; some such set must exist.
\end{proof}

For $i < N$, when $L + K + i \in S$, set $\beta_{\ell+1}(i) = 0$, and otherwise set $\beta_{\ell+1}(i) \neq C_\ell(L + K + i)$. So for $x \in [L+K,L+K+N)$, if $x \in S$ then $A(x) = 0$ and if $x \notin S$ then $A(x) \neq C_\ell(x)$.

First, we will show that we made $A$ sufficiently different from $C_\ell$.

\begin{claim}
$\gamma(A) \leq p$.
\end{claim}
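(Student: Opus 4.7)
The plan is to read off the bound directly from the construction at stage $\ell$, and then conclude by varying $\ell$ along the sub-list on which a given computable set appears.

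First, I would unwind what the construction guarantees on the interval $[L+K, L+K+N)$. By definition, when $L+K+i \in S$ we set $\beta_{\ell+1}(i) = 0$ (not necessarily equal to $C_\ell(L+K+i)$), and when $L+K+i \notin S$ we deliberately set $\beta_{\ell+1}(i) \neq C_\ell(L+K+i)$. Therefore every agreement between $A$ and $C_\ell$ on $[L+K, L+K+N)$ must occur at a point of $S$, which gives
\[ |(A \leftrightarrow C_\ell) \cap [L+K, L+K+N)| \leq |S| \leq (p+\epsilon_\ell) N. \]

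Adding the trivial bound $|(A \leftrightarrow C_\ell) \cap [0, L+K)| \leq L+K$ and using the choice of $N$ made in the construction, namely $\frac{L+K+pN+\epsilon_\ell N}{L+K+N} \leq p + 2\epsilon_\ell$, I obtain
\[ \frac{|(A \leftrightarrow C_\ell) \cap [0, L+K+N)|}{L+K+N} \leq \frac{L+K + (p+\epsilon_\ell)N}{L+K+N} \leq p + 2\epsilon_\ell. \]
Thus at the point $n_\ell := L+K+N$ (which depends on $\ell$), the agreement density between $A$ and $C_\ell$ is at most $p + 2\epsilon_\ell$.

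Now fix an arbitrary computable set $R$. Since the list $(C_\ell)$ enumerates every computable set infinitely often, there is an infinite set of indices $\ell$ with $C_\ell = R$; for each such $\ell$ the previous paragraph supplies a witness $n_\ell$ with agreement density at most $p+2\epsilon_\ell$. Because $\epsilon_\ell \to 0$, this forces
\[ \underline{\rho}(A \leftrightarrow R) = \liminf_{n\to\infty} \frac{|(A \leftrightarrow R) \cap [0,n)|}{n} \leq p. \]
Taking the supremum over computable $R$ yields $\gamma(A) \leq p$, as desired.

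There is really no substantive obstacle here: all the delicate work (choosing $M$, $K$, $N$, and especially producing the set $S$ via the hypergeometric tail bound) has already been done in the construction, so the only thing to verify is that the definition of $\beta_{\ell+1}$ in terms of $S$ and $C_\ell$ gives exactly the claimed upper bound on agreement, and that the parameters were chosen so that the contribution of the earlier block of length $L+K$ is absorbed into the error term $2\epsilon_\ell$.
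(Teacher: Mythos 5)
Your proof is correct and follows essentially the same route as the paper: bound agreements on $[L+K,L+K+N)$ by $|S| \leq (p+\epsilon_\ell)N$, absorb the first $L+K$ positions trivially, invoke the choice of $N$ to get density at most $p+2\epsilon_\ell$ at $L+K+N$, and conclude using the fact that each computable set occurs as $C_\ell$ infinitely often with $\epsilon_\ell \to 0$. If anything, your explicit quantification over an arbitrary computable $R$ and the final supremum step is slightly more careful in its bookkeeping than the paper's one-line conclusion.
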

\begin{proof}
We have, for each $\ell$, that
\begin{align*}
\frac{|(A \leftrightarrow C_\ell) \cap [0,L+K+N)|}{L+ K + N} \leq& \frac{L + K + | \{x \in S \mid A(i) = C_\ell(i) \}|}{L+ K + N}\\ \leq& \frac{L + K + |S|}{L + K + N}\\ \leq& \frac{L + K + pN + \epsilon_\ell N}{L+K+N}.
\end{align*}
Here, $L$, $K$, and $N$ are the values of those variables at stage $\ell$ of the construction. By choice of $N$,
\[ \frac{L + K + pN + \epsilon_\ell N}{L+K+N} \leq p + 2 \epsilon_\ell.\]
So
\[ \frac{|(A \leftrightarrow C_\ell) \cap [0,L+K+N)|}{L+ K + N} \leq p + 2 \epsilon_\ell.\]
Then, noting that for each $\ell$ there are infinitely many $\ell'$ with $C_\ell = C_{\ell'}$,
\[ \gamma(A) = \liminf_{n \to \infty} \frac{|(A \leftrightarrow C_\ell) \cap [0,n)|}{n} \leq p. \qedhere\]
\end{proof}

Second, we will show that $B_e$ is sufficiently well approximated by $B_e^*$. The following claim verifies the hypotheses of Claim \ref{clm1}.

\begin{claim}
Fix $e$. Given $n$, let $\ell \geq e$ be such that $M_{\ell+1} > n \geq M_{\ell}$, where $M_\ell$ is the value of $M$ at stage $\ell$. Then
\[ \frac{|(B_e \leftrightarrow B_e^*) \cap I_n|}{n} \geq p - \epsilon_\ell.\]
\end{claim}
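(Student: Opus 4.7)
My plan is to compute $|(B_e \leftrightarrow B_e^*) \cap I_n|$ exactly and then split on the dichotomy already recorded for pairs $(n,e)$ with $n \geq M_\ell$. The agreement count is clean: on $I_{n,e}^{**,1} \cup I_{n,e}^{**,2}$ exactly one of each pair of preimages of $y \in J_{n,e}^{**}$ matches $B_e^*$, contributing $|J_{n,e}^{**}|$; on $I_{n,e}^*$, $f_e$ is injective and $B_e^*$ is $0$, so the contribution is $|J_{n,e}^* \setminus A|$. Therefore
\[ |(B_e \leftrightarrow B_e^*) \cap I_n| = |J_{n,e}^{**}| + |J_{n,e}^* \setminus A|, \qquad n = |J_{n,e}| = |J_{n,e}^*| + 2|J_{n,e}^{**}|. \]
If $|J_{n,e}^{**}| \geq (\frac{1}{2} - \epsilon_\ell)|J_{n,e}|$, then using $p \leq \frac{1}{2}$ the first summand alone already gives density $\geq p - \epsilon_\ell$ and the claim holds.

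In the remaining case $|J_{n,e}^*| > 2\epsilon_\ell|J_{n,e}|$ we have $(n,e) \in \Omega_\ell$, and I aim to prove the intermediate bound
\[ |J_{n,e}^* \setminus A| \geq p|J_{n,e}^*| - L_\ell. \]
Granting this, condition (2) in the choice of $M_\ell$ (applied with $m_1 = |J_{n,e}^{**}|$, $m_2 = |J_{n,e}^*|$, $M' = n \geq M_\ell$) immediately gives agreement $\geq m_1 + pm_2 - L_\ell \geq (p - \epsilon_\ell)n$.

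To prove the intermediate bound, I partition $J_{n,e}^*$ by position into $J_0 \cup J_\alpha \cup J_\beta \cup J_+$ according to $[0,L_\ell)$, $[L_\ell, L_\ell+K_\ell)$, $[L_\ell+K_\ell, L_{\ell+1})$, and $[L_{\ell+1},\infty)$, and account for zeros of $A$ on each piece. The set-$S$ claim at stage $\ell$ rearranges (using that its ``free pass'' term $|J_{n,e}^* \setminus [L_\ell+K_\ell, L_{\ell+1})|$ equals $|J_0| + |J_\alpha| + |J_+|$) to $|S_\ell \cap J_{n,e}^*| \geq p|J_{n,e}^*| - |J_0| - |J_\alpha| - |J_+|$, and since $A(x) = 0$ for $x \in S_\ell \subseteq J_\beta$ this is a lower bound on $|J_\beta \setminus A|$. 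On $J_\alpha$, $A$ is identically $0$ because that range is $\alpha_{\ell+1} = 0^{K_\ell}$. For $J_+$, I invoke look-ahead: since $n < M_{\ell+1}$ and $e \leq \ell + 1$, the stage-$\ell+1$ choice of $K_{\ell+1}$ forces all of $J_{n,e}$ below $L_{\ell+1} + K_{\ell+1}$, so $J_+ \subseteq [L_{\ell+1}, L_{\ell+1}+K_{\ell+1}) = \alpha_{\ell+2}$, which is entirely zeros. Summing the three $A^c$-contributions, the terms $|J_\alpha|$ and $|J_+|$ cancel against the subtractions, leaving $|J_{n,e}^* \setminus A| \geq p|J_{n,e}^*| - |J_0| \geq p|J_{n,e}^*| - L_\ell$.

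The main obstacle is the $J_+$ piece. Stage $\ell$'s set-$S$ construction says nothing directly about $A$ past position $L_{\ell+1}$, so without an additional input one loses control exactly on the elements of $J_{n,e}^*$ that protrude beyond the current prefix. The trick is to notice that the stage-$\ell+1$ bookkeeping, set up to confine small intervals' $f_e$-images inside the next prefix, also quarantines $J_+$ inside the next all-zero $\alpha$-block; once that is observed, the rest is a case split and arithmetic against condition (2).
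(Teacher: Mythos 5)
Your proof is correct and follows essentially the same route as the paper's: exact bookkeeping of the agreement coming from the paired preimages of $J_{n,e}^{**}$ and from the injective part $J_{n,e}^{*}$, the stage-$\ell$ choice of $S$ for pairs in $\Omega$, the all-zero blocks $\alpha_{\ell+1}$ and (via the stage-$(\ell+1)$ choice of $K$, using $n < M_{\ell+1}$ and $e \leq \ell$) $\alpha_{\ell+2}$, and the final appeal to condition (2) in the choice of $M_\ell$ with $m_1 = |J_{n,e}^{**}|$, $m_2 = |J_{n,e}^{*}|$. The only difference is cosmetic: you handle the non-$\Omega$ case $|J_{n,e}^{**}| \geq (\tfrac{1}{2}-\epsilon_\ell)|J_{n,e}|$ explicitly inside the proof, whereas the paper disposes of it in the informal discussion preceding the choice of $S$.
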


The $n$ satisfying $M_{\ell+1} > n \geq M_{\ell}$ are the sizes of intervals of medium size for $\ell$, that is, those where $\beta_\ell$ is exactly the right length to determine the amount of agreement between $B_e$ and $B_e^*$ on the interval $I_n$.

\begin{proof}
Write $I_e = I_{n,e}^{*,1} \cup I_{n,e}^{*,2} \cup I_{n,e}^{*,3} \cup I_{n,e}^{**,1} \cup I_{n,e}^{**,2}$ where $I_{n,e}^{**,1}$ and $I_{n,e}^{**,2}$ are as before, and $f_e(I_{n,e}^{*,1}) \subseteq [0,L)$, $f_e(I_{n,e}^{*,2}) \subseteq [L+K,L+K+N)$, and $f_e(I_{n,e}^{*,3}) \subseteq [L,L+K) \cup [L+K+N,\infty)$. (Again, $L$, $K$, and $N$ are the values at stage $\ell$.)

For $x \in I_{n,e}^{*,3}$, $f_e(x) \in [L,L+K) \cup (L+K+N,\infty]$, and so, since $n < M_{\ell+1}$, $B_e^*(x) = 0 = A(f_e(x)) = B_e(x)$. Thus
\[ |(B_e \leftrightarrow B_e^*) \cap I_{n,e}^{*,3}| = |I_{n,e}^{*,3}|.\]
For $I_{n,e}^{*,2}$, if $f(x) \in S$, then $B_e(x) = B_e^*(x)$ since $x \notin B_e^*$ and $f_e(x) \notin A$, so that $x \notin B_e$. Thus (recalling that $J_{n,e}^{*,2} = f_e(I_{n,e}^{*,2})$, and that $f_e$ is injective on this set):
\[ |(B_e \leftrightarrow B_e^*) \cap I_{n,e}^{*,2}| \geq |S \cap J_{n,e}^{*,2}|.\]
By choice of $S$, we have
\[ \frac{|S \cap J_{n,e}^{*}| + |J_{n,e}^{*} \setminus [L+K,L+K+N)|}{|J_{n,e}^{*}|} \geq p. \]
Thus, noting that $|J_{n,e}^{*} \setminus [L+K,L+K+N)| = |I_{n,e}^{*,1}| + |I_{n,e}^{*,3}|$,
\begin{equation*}\label{*} |(B_e \leftrightarrow B_e^*) \cap I_{n,e}^{*}| + L \geq p|I_{n,e}^{*}|. \tag{$*$}\end{equation*}

By definition of $B_e^*$,
\begin{equation*}\label{**} |(B_e \leftrightarrow B_e^*) \cap I_{n,e}^{**}| = |I_{n,e}^{**,1}| = |I_{n,e}^{**,2}|.\tag{$**$}\end{equation*}
This is because each $x \in I_{n,e}^{**,1}$ can be paired with a $y \in I_{n,e}^{**,2}$ with $f_e(x) = f_e(y)$; we have $x \in B_e^*$ and $y \notin B_e^*$. Either $f_e(x) = f_e(y) \in A$, in which case $x,y \in B_e$, or $f_e(x) = f_e(y) \notin A$, in which case $x,y \notin B_e$.

So combining equations \eqref{*} and \eqref{**}, we get
\[ |(B_e \leftrightarrow B_e^*) \cap I_{n,e}| \geq |I_{n,e}^{**,1}| + p|I_{n,e}^{*}| - L.\]
By choice of $M_{\ell}$,
\[\frac{|I_{n,e}^{**,1}| + p |I_{n,e}^{*}| - L}{|I_{n,e}|} \geq p - \epsilon_\ell. \qedhere\]
\end{proof}

This completes the proof of the theorem, as Claim \ref{clm1} now gives that $\gamma(B_e) \geq p$ for each $e$.
\end{proof}

\nocite{HJKS}
\bibliography{References}

\newcommand{\etalchar}[1]{$^{#1}$}
\begin{thebibliography}{ACD{\etalchar{+}}16}

\bibitem[ACD{\etalchar{+}}16]{ACDJL}
Uri Andrews, Mingzhong Cai, David Diamondstone, Carl Jockusch, and Steffen
  Lempp.
\newblock Asymptotic density, computable traceability, and 1-randomness.
\newblock {\em Fundamenta Mathematicae}, 2016.

\bibitem[Chv79]{Chvtal}
Va\v{s}ek Chv{\'a}tal.
\newblock The tail of the hypergeometric distribution.
\newblock {\em Discrete Math.}, 25(3):285--287, 1979.

\bibitem[Hir]{H}
Denis Hirschfeldt.
\newblock Some questions in computable mathematics.
\newblock To appear.

\bibitem[HJKS16]{HJKS}
Denis~R. Hirschfeldt, Carl~G. Jockusch, Rutger Kuyper, and Paul~E. Schupp.
\newblock Coarse reducibility and algorithmic randomness.
\newblock {\em J. Symb. Log.}, 81(3):1028--1046, 2016.

\bibitem[HJMS16]{HJMS}
Denis Hirschfeldt, Carl Jockusch, Timothy McNicholl, and Paul Schupp.
\newblock Asymptotic density and the coarse computability bound.
\newblock {\em Computability}, 5(1):13--27, 2016.

\bibitem[Hoe63]{Hoeffding}
Wassily Hoeffding.
\newblock Probability inequalities for sums of bounded random variables.
\newblock {\em J. Amer. Statist. Assoc.}, 58:13--30, 1963.

\bibitem[HPS71]{Hoel}
Paul Hoel, Sidney Port, and Charles Stone.
\newblock {\em Introduction to probability theory}.
\newblock Houghton Mifflin Co., Boston, Mass., 1971.
\newblock The Houghton Mifflin Series in Statistics.

\bibitem[JS12]{JS}
Carl Jockusch and Paul Schupp.
\newblock Generic computability, {T}uring degrees, and asymptotic density.
\newblock {\em J. Lond. Math. Soc.}, 85(2):472--490, 2012.

\bibitem[MN15]{MN}
Benoit Monin and Andr\'e Nies.
\newblock A unifying approach to the gamma question.
\newblock {\em Proceedings of the Thirtieth Annual IEEE Symposium on Logic in
  Computer Science (LICS 2015)}, pages 585--596, July 2015.

\bibitem[Mon]{M}
Benoit Monin.
\newblock Asymptotic density and error-correcting codes.
\newblock Preprint.

\end{thebibliography}
\bibliographystyle{alpha}

\end{document}